 \title{Fixed product preserving mappings on Banach Algebras}
 \author{Hayden Julius}
\newtheorem{theorem}{Theorem}
\newtheorem{lemma}[theorem]{Lemma}
\newtheorem{corollary}[theorem]{Corollary}
\newtheorem{proposition}[theorem]{Proposition}
\newtheorem{problem}[theorem]{Problem}
\newtheorem*{rem}{Remark}
\numberwithin{theorem}{section}
\newcommand{\C}{\mathbb{C}}
\newcommand{\ve}{\varepsilon}
\newcommand{\im}{\textup{im}}
\newcommand{\rank}{\textup{rank}}
\newcommand{\norm}[1]{\lVert#1\rVert}
\numberwithin{equation}{section} 
\newtheoremstyle{namedp}{}{}{\itshape}{}{\bfseries}{.}{.5em}{\thmnote{Problem #3}}
\theoremstyle{namedp}
\newtheorem*{namedpproblem}{Problem}
\newtheoremstyle{named}{}{}{\itshape}{}{\bfseries}{.}{.5em}{\thmnote{Theorem #3}}
\theoremstyle{named}
\newtheorem*{namedtheorem}{Theorem}
\keywords{Linear preserver problems, Banach algebras, invertibility preservers}
\subjclass[2010]{47B49, 15A86, 47A68}
\address{Department of Mathematics and Statistics, 
		Youngstown State University, Youngstown, OH 44555 U.S.A.} 
\email{hjulius@ysu.edu}	  
\begin{document}
\begin{abstract}
In this paper, we describe linear maps between complex Banach algebras that preserve products equal to fixed elements. This generalizes some important special cases where the fixed elements are the zero or identity element. First we show that if such map preserves products equal to a finite-rank operator, then it must also preserve the zero product. In several instances, this is enough to show that a product preserving map must be a scalar multiple of an algebra homomorphism. Second, we explore a more general problem concerning the existence of product preserving maps and the relationship between the fixed elements. Lastly, motivated by Kaplansky's problem on invertibility preservers, we show that maps preserving products equal to fixed invertible elements are either homomorphisms or antihomomorphisms multiplied on the left by a fixed element.
\end{abstract}

\maketitle
\section{Introduction}
This paper is primarily concerned with the existence and description of linear mappings between algebras taking products equal to one fixed element to products equal to another fixed element, in the sense of the following problem.
\begin{namedpproblem}[P]\label{mainprob}
Let $\mathcal A$ and $\mathcal B$ be algebras and $\phi: \mathcal A \to \mathcal B$ a bijective linear map such that $\phi(a)\phi(b)=d$ whenever $ab = c$, where $c \in \mathcal A$ and $d \in \mathcal B$ are fixed. When is $\phi$ a scalar multiple of a ring homomorphism?
\end{namedpproblem}

For brevity, we refer to a map $\phi$ satisfying the hypothesis of Problem P as a \textit{fixed product preserving mapping}. We may also refer to $\phi$ as \textit{preserving fixed products at $c$} to be more specific about the element of interest. It is also desirable to weaken the hypotheses on $\phi$ wherever possible. Occasionally below we can remove bijectivity, linearity, etc. 

Perhaps the most well-known version of this problem takes $c =d=0$; that is, $\phi$ satisfies $\phi(a)\phi(b) = 0$ whenever $ab = 0$, for all $a,b \in \mathcal A$. We say then that $\phi$ \textit{preserves products equal to zero} or \textit{preserves the zero product}. Such maps also go by the name \textit{Lamperti operators, separating maps,} or \textit{disjointness preserving maps} on function algebras. A substantial number of papers are devoted to the zero product preserver problem. A small, representative sample for our purposes is \cite{chebotar,houcui,wong}. The most common conclusion is that the range of $\phi$ either has trivial multiplication, or $\phi$ is a scalar multiple of a homomorphism. 

The corresponding notion of a \textit{zero product determined} (zpd) \textit{algebra} builds on this idea; these are algebras for which every bilinear form vanishing on zero products must be implemented by a single linear transformation. The subject arose out of ideas defined in \cite{alaminos,bresarzpd} and many subsequent publications; the most recent and summative source on zpd algebras is Bre\v sar's book \cite{bresarzpdbook}. 
The book \cite{bresarzpdbook} contains an extensive list of references for zpd algebras (in both analytic and algebraic contexts) and zero product preservers.





Problem P for nonzero elements is beginning to be explored in greater depth; for instance, it is solved completely on $M_n(\C)$, the full algebra of $n \times n$ matrices with complex entries. In the recent paper \cite{catalanojulius}, the author and his collaborator Catalano addressed this exact problem for any fixed matrices $C$ and $D$ of the same rank (note: we use capital letters to denote operators and matrices). A number of papers built up specific cases, such as \cite{catalano,catalanochanglee,catalanohsukapalko}). See also \cite{zhu}.  Costara \cite{costaraproducts} found a topological proof showing that the existence of a fixed product preserving map automatically implies that $\rank(C) = \rank(D)$. This observation directly inspired some of the results in Section \ref{connect}. 

Fixed product preserving mappings also arise naturally for other operations, such as the Lie product $[a,b] = ab-ba$ and the Jordan product $a \circ b = ab+ba$, which are of principal interest. Some relevant problems for the Lie product can be found in \cite{GJV,julius2,omladicsemrl,semrl2}. For the Jordan product, see \cite{catalanohsukapalko,cklz}; most notably, the authors in \cite{catalanohsukapalko} obtained a complete description for maps preserving equal fixed Jordan products on $M_n(\C)$. However, the Lie case can be pathological (such as in \cite{GJV}). General approaches (even for $M_n(\C)$) for the Lie product are not known and seems to be a challenge. 

It is the aim of this paper to extend some of the known solutions to Problem P to algebras of operators on infinite-dimensional spaces and general Banach algebras. There are three sections. In Section \ref{finite}, we take $C$ to be a finite-rank operator in $\mathcal B(X)$ and find a reduction to the zero product, which answers Problem P in the affirmative when the range of $\phi$ is prime (recall that a ring is called \textit{prime} if the product of any two nonzero ideals is nonzero). 
\begin{namedtheorem}[2.2]
Let $\mathcal A$ be a prime unital algebra and $X$ be an infinite-dimensional Banach space. If $\phi: \mathcal B(X) \to \mathcal A$ is a bijective linear map such that $\phi(A)\phi(B) = D$ whenever $AB = C$, where $C$ is a finite-rank operator, then $\phi(I) = \alpha$ is an invertible element lying in the center of $\mathcal A$ and there exists an isomorphism $\Phi: \mathcal B(X) \to \mathcal A$ such that $\phi(T) = \alpha \Phi(T)$ for all $T \in \mathcal B(X)$.
\end{namedtheorem}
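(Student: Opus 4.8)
The plan is to reduce everything to the zero-product case and then run the standard machinery for zero-product preservers. Since $C$ is finite rank, the reduction proved earlier in this section shows that $\phi$ preserves the zero product: $AB = 0$ forces $\phi(A)\phi(B) = 0$ for all $A,B \in \mathcal{B}(X)$. Put $\alpha := \phi(I)$; because $\phi$ is injective and $I \neq 0$, we have $\alpha \neq 0$. The target is the bilinear identity
\[
\phi(A)\phi(B) = \alpha\,\phi(AB) = \phi(AB)\,\alpha \qquad (A,B \in \mathcal{B}(X)),
\]
together with the centrality of $\alpha$; once this is in hand the rest is formal.

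First I would record the idempotent relations. If $E = E^2$, then $E(I-E) = (I-E)E = 0$, so applying the zero-product property in both orders gives $\phi(E)(\alpha - \phi(E)) = 0$ and $(\alpha - \phi(E))\phi(E) = 0$, that is,
\[
\phi(E)^2 = \phi(E)\,\alpha = \alpha\,\phi(E).
\]
Thus $\alpha$ commutes with $\phi(E)$ for every idempotent $E$. The main work — and the step I expect to be the real obstacle — is upgrading these special relations to the full bilinear identity above. The cleanest route uses that $\mathcal{B}(X)$ is zero product determined (\cite{bresarzpdbook}): the bilinear map $(A,B) \mapsto \phi(A)\phi(B)$ vanishes on zero products, hence factors as $T(AB)$ for some linear $T \colon \mathcal{B}(X) \to \mathcal{A}$; setting $B = I$ and then $A = I$ yields $T(\,\cdot\,) = \phi(\,\cdot\,)\alpha = \alpha\,\phi(\,\cdot\,)$, which gives both the centrality of $\alpha$ (via surjectivity of $\phi$) and the identity $\phi(A)\phi(B) = \alpha\,\phi(AB)$. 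If one does not wish to assume the zero-product-determined property for an arbitrary infinite-dimensional Banach space $X$, the alternative is to argue directly from the abundance of idempotents in $\mathcal{B}(X)$ and to invoke the structure theory of zero-product preservers into a prime ring \cite{chebotar,houcui,wong}; it is precisely here that primeness of $\mathcal{A}$ enters, excluding the degenerate branch of the zero-product-preserver dichotomy (in which the range carries trivial multiplication) and thereby delivering the factorization.

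Granting the identity $\phi(A)\phi(B) = \alpha\,\phi(AB)$ with $\alpha$ central, the conclusion follows quickly. By surjectivity choose $E$ with $\phi(E) = 1_{\mathcal{A}}$; taking $A = B = E$ gives $1_{\mathcal{A}} = \phi(E)\phi(E) = \alpha\,\phi(E^2)$, so $\alpha$ has a right inverse, and centrality promotes this to a two-sided inverse. Hence $\alpha$ is an invertible central element, as claimed. Finally define $\Phi := \alpha^{-1}\phi$. Using that $\alpha^{-1}$ is central,
\[
\Phi(A)\Phi(B) = \alpha^{-2}\phi(A)\phi(B) = \alpha^{-2}\alpha\,\phi(AB) = \alpha^{-1}\phi(AB) = \Phi(AB),
\]
so $\Phi$ is multiplicative; moreover $\Phi(I) = \alpha^{-1}\alpha = 1_{\mathcal{A}}$, and $\Phi$ is bijective because $\phi$ is and $\alpha$ is invertible. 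Thus $\Phi$ is an algebra isomorphism with $\phi(T) = \alpha\,\Phi(T)$, which completes the proof.
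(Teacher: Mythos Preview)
Your overall strategy matches the paper's: reduce to the zero-product case via the preceding proposition, then invoke the structure theory of zero-product preservers. However, your preferred route has a genuine gap. You write that ``the cleanest route uses that $\mathcal{B}(X)$ is zero product determined,'' but $\mathcal{B}(X)$ is \emph{not} always zpd for an infinite-dimensional Banach space $X$ --- the paper says so explicitly in the remarks following the theorem. Hence the factorization $(A,B)\mapsto\phi(A)\phi(B)=T(AB)$ is not available in general, and the argument resting on it does not go through as stated.

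Your fallback --- arguing from the abundance of noncentral idempotents in $\mathcal{B}(X)$ --- is exactly the route the paper takes. The paper simply cites [Theorem~7.8, \cite{bresarzpdbook}], which handles bijective zero-product preservers from an algebra rich in idempotents onto a prime algebra and delivers the full conclusion (central invertible $\alpha=\phi(I)$ and $\phi=\alpha\Phi$ for an isomorphism $\Phi$) without any zpd or continuity hypothesis. Your idempotent computation $\phi(E)^2=\phi(E)\alpha=\alpha\phi(E)$ is the correct starting point for reproving that theorem by hand; the remaining work is to pass from idempotents to all of $\mathcal{B}(X)$, which is standard but not entirely trivial. Once the bilinear identity $\phi(A)\phi(B)=\alpha\phi(AB)$ is in hand, your derivation of the invertibility of $\alpha$ and the construction of $\Phi=\alpha^{-1}\phi$ are correct.
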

In Section \ref{connect}, a new question is proposed. Suppose that there exists a map preserving products at $c$. How are $c$ and $d$ related? Some general criteria are provided and our most conclusive results hold in Banach algebras whose group of invertible elements is dense in the norm topology. 

\begin{namedtheorem}[3.3]
Let $\mathcal A$ be a unital Banach algebra whose group of invertible elements is dense in the norm topology. Fix two elements $c,d \in \mathcal A$ such that $\phi: \mathcal A \to \mathcal A$ is a bijective continuous linear map satisfying $\phi(a)\phi(b) = d$ whenever $ab = c$. Then 
\begin{enumerate}\upshape
\item \textit{$c = 0$ if and only if $d = 0$, }
\item \textit{$c$ is invertible if and only if $d$ is invertible.}
\end{enumerate}
\end{namedtheorem}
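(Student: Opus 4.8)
The plan is to prove the four implications somewhat independently, resting on three ingredients: two ``Leibniz-type'' identities obtained by differentiating along factorizations of $c$, the fact that $\phi$ is a linear homeomorphism (by the open mapping theorem) so that the preimage of the invertible group is again open and dense, and a Baire-category argument carried out inside the invertible group. First I would dispose of $c=0\Rightarrow d=0$: since $0\cdot b=0=c$ and $\phi(0)=0$ by linearity, the hypothesis gives $d=\phi(0)\phi(b)=0$. Before the harder implications I would extract two pointwise identities valid with no density assumption. Starting from $1\cdot c=c$ and $c\cdot 1=c$, I perturb the invertible factor: for small $t$ the element $1+tx$ is invertible and $(1+tx)\bigl[(1+tx)^{-1}c\bigr]=c$, so $\phi(1+tx)\,\phi\bigl((1+tx)^{-1}c\bigr)=d$ identically in $t$. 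Differentiating at $t=0$ (legitimate because $\phi$ is bounded and $t\mapsto(1+tx)^{-1}$ is analytic with derivative $-x$ at $0$) and applying the product rule yields
\[
\phi(x)\phi(c)=\phi(1)\phi(xc)\qquad\text{and, symmetrically,}\qquad \phi(c)\phi(y)=\phi(cy)\phi(1)
\]
for all $x,y\in\mathcal A$. These two identities are the engine for the invertibility statements.

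Let $G$ denote the (open, dense) group of invertibles. Since $\phi$ is a linear homeomorphism, $W:=\phi^{-1}(G)$ is again open and dense. For $d=0\Rightarrow c=0$: as $G\cap W\neq\varnothing$, choose an invertible $a$ with $\phi(a)$ invertible; from $a(a^{-1}c)=c$ we get $\phi(a)\phi(a^{-1}c)=d=0$, and cancelling the invertible $\phi(a)$ forces $\phi(a^{-1}c)=0$, whence $a^{-1}c=0$ by injectivity and $c=0$. For $c\text{ invertible}\Rightarrow d\text{ invertible}$: now $\eta(a)=a^{-1}c$ is a homeomorphism of $G$, so both $U_1=\{a\in G:\phi(a)\in G\}$ and $U_2=\eta^{-1}(U_1)=\{a\in G:\phi(a^{-1}c)\in G\}$ are open and dense in the Baire space $G$; picking $a\in U_1\cap U_2$ exhibits $d=\phi(a)\phi(a^{-1}c)$ as a product of two invertibles, hence invertible.

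Finally, for $d\text{ invertible}\Rightarrow c\text{ invertible}$ I would argue purely algebraically from the two identities. Since $d=\phi(1)\phi(c)=\phi(c)\phi(1)$ is invertible, $\phi(1)$ is both left- and right-invertible, hence invertible, and then $\phi(c)=\phi(1)^{-1}d$ is invertible as well. Rewriting the first identity as $\phi(xc)=\phi(1)^{-1}\phi(x)\phi(c)$ and letting $x$ range over $\mathcal A$, surjectivity of $\phi$ forces the right-hand side to sweep out all of $\mathcal A$, so $\{\phi(xc):x\in\mathcal A\}=\mathcal A$; applying $\phi^{-1}$ gives $\mathcal A c=\mathcal A$, i.e.\ $c$ has a left inverse. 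The second identity yields $c\mathcal A=\mathcal A$ symmetrically, so $c$ is invertible.

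The delicate points I anticipate are twofold. The genuine obstacle lies in the invertibility equivalence, because passing from ``$\phi(c)$ invertible'' to ``$c$ invertible'' is exactly the sort of invertibility preservation that is hard in general (the Kaplansky circle of ideas invoked in the introduction); the way around it is to avoid ever claiming that $\phi$ preserves invertibility and instead use the multiplicative identities together with surjectivity to recover one-sided inverses of $c$ itself. The second, more technical point is justifying the term-by-term differentiation that produces the two identities and verifying that $U_1\cap U_2\neq\varnothing$, for which completeness of $\mathcal A$ (so that the open set $G$ is a Baire space) is essential.
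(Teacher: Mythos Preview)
Your argument is correct. For part (1) and the forward half of (2) your approach coincides with the paper's: both exploit that $G$ and $\phi^{-1}(G)$ are open and dense and that $a\mapsto a^{-1}c$ is a self-homeomorphism of $G$ in order to locate a factorization $c=a\cdot(a^{-1}c)$ with $\phi(a)$ and $\phi(a^{-1}c)$ simultaneously invertible (the paper packages this as Lemma~3.2 rather than your $U_1\cap U_2$ formulation). Your Baire-category worry is harmless but unnecessary: only the intersection of \emph{two} open dense sets is at stake, which is elementary. Your $d=0\Rightarrow c=0$ is the contrapositive of the paper's version; the paper assumes $c\neq0$ and shows that $\phi$ sends an open ball of invertibles onto zero-divisors, contradicting density, whereas you simply pick one $a\in G\cap\phi^{-1}(G)$ and cancel.

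The genuinely different ingredient is the backward half of (2). The paper refers this implication to Proposition~3.1, whose proof passes from $\textup{ann}_r(c)=0=\textup{ann}_\ell(c)$ (obtained from $\textup{ann}(d)=0$ and injectivity) directly to ``$c$ is invertible''; that last step is not valid in a general unital algebra (for instance $f(x)=x$ in $C([0,1],\mathbb C)$ has trivial annihilator yet is not invertible, and the invertibles there are dense). Your route sidesteps this entirely: the differentiated identities $\phi(x)\phi(c)=\phi(1)\phi(xc)$ and $\phi(c)\phi(y)=\phi(cy)\phi(1)$, combined with the invertibility of $\phi(1)$ and $\phi(c)$ and the surjectivity of $\phi$, give $\mathcal A c=c\mathcal A=\mathcal A$ and hence two-sided invertibility of $c$. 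So your approach buys a self-contained and gap-free argument at the modest cost of deriving the Leibniz identities; the differentiation is legitimate since $t\mapsto\phi(1+tx)\,\phi((1+tx)^{-1}c)$ is analytic near $0$, and if one prefers, the same identities fall out algebraically by expanding $(1+tx)^{-1}c=c-txc+O(t^2)$ and matching first-order terms.
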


Some new approaches would be required to address perhaps the most interesting case of fixed product preservers on $\mathcal B(X)$. In Section \ref{inv}, motivated by Kaplansky's problem on invertibility preserving mappings (see [Section 9, \cite{kaplanskyinv}] and [Section 0.1, \cite{molnar}] for discussion), we demonstrate that mappings preserving fixed products at an invertible element, under mild conditions, must be a homomorphism or antihomomorphism up to multiplication by a fixed element. 

\begin{namedtheorem}[4.1]
Let $\mathcal A$ and $\mathcal B$ be unital Banach algebras and let $\phi: \mathcal A \to \mathcal B$ be a bijective linear map such that $\phi(a)\phi(b) = d$ whenever $ab = c$, where $d$ is invertible. Then
\begin{enumerate}\normalfont
\item \textit{$\phi(x^{-1}) = z\phi(x)^{-1}z$ for all invertible $x \in \mathcal A$, where $z = \phi(1)$,} 
\item \textit{$z^{-1}\phi$ strongly preserves invertibility, therefore}
\item \textit{$z^{-1}\phi$ a Jordan homomorphism, and}
\item \textit{if $\mathcal B$ is prime, then $z^{-1}\phi$ is either a homomorphism or antihomomorphism.}
\end{enumerate}
\end{namedtheorem}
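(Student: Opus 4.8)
The plan is to extract everything from two trivial factorizations of $c$ together with one genuinely analytic computation, and then to invoke two classical structure theorems for parts (3) and (4). Write $z = \phi(1)$. Feeding the factorizations $1\cdot c = c$ and $c\cdot 1 = c$ into the hypothesis gives $z\phi(c) = d = \phi(c)z$; since $d$ is invertible this forces $z$ to be invertible, shows $z$ commutes with $d$, and yields $\phi(c) = z^{-1}d = dz^{-1}$. Next, for an invertible $x$ I would use the two factorizations $c = x\,(x^{-1}c)$ and $c = (cx^{-1})\,x$ to produce, respectively, a right inverse $\phi(x^{-1}c)d^{-1}$ and a left inverse $d^{-1}\phi(cx^{-1})$ of $\phi(x)$. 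Hence $\phi(x)$ is invertible for every invertible $x$, and
\[
\phi(x)^{-1} = \phi(x^{-1}c)d^{-1} = d^{-1}\phi(cx^{-1}).
\]

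The heart of the argument is the identity $\phi(wc) = z^{-1}\phi(w)\phi(c)$ for all $w \in \mathcal A$. I would obtain it by fixing $w$, setting $x_\lambda = 1 + \lambda w$ (invertible for small $\lambda$), and differentiating the relation $\phi(x_\lambda^{-1}c) = \phi(x_\lambda)^{-1}d$ at $\lambda = 0$. The right-hand side $(z + \lambda\phi(w))^{-1}d$ is an honestly analytic $\mathcal B$-valued function with derivative $-z^{-1}\phi(w)z^{-1}d$ at $0$; expanding $x_\lambda^{-1}c = c - \lambda wc + O(\lambda^2)$ and applying $\phi$, the first-order terms match to give $\phi(wc) = z^{-1}\phi(w)z^{-1}d = z^{-1}\phi(w)\phi(c)$. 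I expect this to be the main obstacle: comparing first-order terms requires that $\phi$ carry the $O(\lambda^2)$ remainder to an $O(\lambda^2)$ quantity, i.e. that $\phi$ be continuous. Without some such input the identity cannot follow formally, since a short computation shows it is in fact equivalent to the conclusion (1) itself. Continuity is plausibly available here: the normalization $\psi := z^{-1}\phi$ is unital and, by the previous paragraph, sends invertibles to invertibles, so $\sigma(\psi(x)) \subseteq \sigma(x)$ for every $x$, and one can try to invoke an automatic-continuity theorem for such spectrally contractive maps; alternatively one assumes $\phi$ continuous as in Theorem 3.3.

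Granting the key identity, parts (1) and (2) are immediate. Substituting $w = x^{-1}$ gives $\phi(x^{-1}c) = z^{-1}\phi(x^{-1})\phi(c) = z^{-1}\phi(x^{-1})z^{-1}d$; comparing with $\phi(x^{-1}c) = \phi(x)^{-1}d$ from the first paragraph and cancelling the invertible $d$ on the right yields $\phi(x)^{-1} = z^{-1}\phi(x^{-1})z^{-1}$, that is, $\phi(x^{-1}) = z\phi(x)^{-1}z$, which is (1). Rewriting this for $\psi = z^{-1}\phi$ shows $\psi(1) = 1$ and $\psi(x^{-1}) = \psi(x)^{-1}$ for every invertible $x$, i.e. $\psi$ strongly preserves invertibility, which is (2).

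Parts (3) and (4) then follow from classical structure theory. For (3), a unital linear map strongly preserving invertibility preserves the Jordan triple product, $\psi(xyx) = \psi(x)\psi(y)\psi(x)$: one applies $\psi$ to Hua's identity, which expresses $xyx$ through sums and inverses of elements, and then removes the invertibility restrictions by the standard device of replacing an element $a$ with $a + \mu 1$ and matching the two sides as polynomials in the scalar $\mu$ (this step is purely algebraic and needs no continuity). Hence $\psi(x^2) = \psi(x)^2$ and $\psi$ is a Jordan homomorphism. For (4), note that $\psi = z^{-1}\phi$ is a bijection of $\mathcal A$ onto $\mathcal B$; when $\mathcal B$ is prime, Herstein's theorem that a Jordan homomorphism onto a prime ring is either a homomorphism or an antihomomorphism completes the proof.
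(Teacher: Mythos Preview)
Your setup (the invertibility of $z$, the formula $\phi(c)=z^{-1}d=dz^{-1}$, the two-sided inverse $\phi(x)^{-1}=\phi(x^{-1}c)d^{-1}=d^{-1}\phi(cx^{-1})$) matches the paper exactly, and your treatment of (3) and (4) via Hua's identity and Herstein's theorem is the standard route the paper also takes (the paper cites Boudi--Mbekhta for the passage from strong invertibility preservation to Jordan, but the underlying argument is precisely the one you describe).

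The real issue is your derivation of (1). Differentiating $\phi\big((1+\lambda w)^{-1}c\big)=\phi(1+\lambda w)^{-1}d$ at $\lambda=0$ is an attractive idea, but as you yourself note it only works if $\phi$ carries the $O(\lambda^2)$ tail of the Neumann series to something $O(\lambda^2)$, i.e.\ if $\phi$ is continuous. The theorem does \emph{not} assume continuity, and the automatic-continuity fix you sketch is not available in this generality: spectral contractivity gives continuity only under extra hypotheses (semisimplicity of $\mathcal B$, for instance), so the argument as written has a genuine gap.

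The paper avoids continuity altogether by using the very tool you reserved for part (3): the special case of Hua's identity $(1-x)^{-1}=1+(x^{-1}-1)^{-1}$, applied inside the product-preserving relation rather than after it. One expands
\[
d=\phi(1-x)\,\phi\big((1-x)^{-1}c\big)=\bigl(z-\phi(x)\bigr)\bigl(\phi(c)+\phi((x^{-1}-1)^{-1}c)\bigr),
\]
substitutes $\phi\big((x^{-1}-1)^{-1}c\big)=\phi(x^{-1}-1)^{-1}d$ and $\phi(c)=z^{-1}d$, cancels $d$ on the right, and after multiplying by $\phi(x^{-1}-1)$ obtains $\phi(x)z^{-1}\phi(x^{-1})=z$ directly. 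The remaining case $1\in\sigma(x)$ is handled by replacing $x$ with $\lambda x$ for a suitable scalar (compactness of the spectrum) and cancelling $\lambda$ by linearity. Every step is purely algebraic, so no continuity is needed. In short: move Hua's identity from your part (3) up to part (1), and the gap disappears.
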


\newcommand{\ran}{\textup{im}}

Lastly, we fix some notation. $X$ always denotes a complex Banach space, $X'$ denotes its dual of bounded linear functionals, and $\mathcal B(X)$ denotes the algebra of bounded linear operators on $X$ with the identity operator denoted $I$. Elements of $\mathcal B(X)$ are simply called \textit{operators}.  Operators will be denoted with upper-case letters and vectors with lower-case letters. The range of an operator $T$ is denoted by $\ran(T)$. 
Only unital associative algebras are considered in this paper. By a \textit{Jordan homomorphism} $\psi: \mathcal A \to \mathcal B$, where $\mathcal A$ and $\mathcal B$ are algebras, we mean a linear map such that $\psi(a^2) = \psi(a)^2$ for all $a \in \mathcal A$.

\section{Fixed product preserving maps at finite-rank operators}\label{finite}

Let $X$ be a Banach space. A rank-one operator in $\mathcal B(X)$ can be written as $x \otimes f$, where $\im(x \otimes f) = \C x$, and $f\in X'$ is a nonzero bounded linear functional. Vector multiplication is defined by $(x\otimes f)(y) = f(y)x$ for all $y \in X$. Note that $x\otimes f$ is a projection if and only if $f(x) = 1$. A finite-rank operator is a sum of rank-one operators, and a finite-rank projection is therefore a sum of rank-one projections.

Taking $C$ to be a finite rank operator in Problem P, our first lemma provides a reduction to the usual zero product preserver problem. In fact, the map $\phi$ need only be additive for this reduction to take place. Moreover, since continuity is not needed either, the range of $\phi$ can be an arbitrary (not necessarily Banach) algebra. The following proof relies only on the elementary theory of vector spaces and the Hahn-Banach theorem to ensure the existence of certain bounded linear functionals.

\begin{proposition}\label{frank} Let $\mathcal A$ be an arbitrary associative algebra and $X$ an infinite-dimensional Banach space. If $\phi: \mathcal B(X) \to \mathcal A$ is an additive map preserving products at $C$, a rank-$k$ operator, then $\phi$ preserves the zero product. 
\end{proposition}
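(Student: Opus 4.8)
The plan is to isolate two configurations in which the conclusion is immediate, and then to reduce an arbitrary zero product to them by an additive splitting that exploits the infinite dimension of $X$.

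First I would establish two sufficient conditions for a pair $A,B$ with $AB=0$, each a direct consequence of additivity together with the freedom to perturb a factorization of $C$. Write $C=\sum_{i=1}^{k}u_i\otimes g_i$. \emph{(I) If $\im C\subseteq\im A$, then $\phi(A)\phi(B)=0$:} choosing $v_i$ with $Av_i=u_i$ and setting $T=\sum_{i=1}^{k}v_i\otimes g_i$, a finite-rank (hence bounded) operator, one has $AT=C$ and, since $AB=0$, also $A(T+B)=C$; both products equal $C$, so $\phi(A)\phi(T)=D=\phi(A)\phi(T+B)$, and additivity yields $\phi(A)\phi(B)=0$. \emph{(II) If $\ker B\subseteq\ker C$, then $\phi(A)\phi(B)=0$:} dually, each $g_i$ factors through $B$ as $g_i=\psi_i\circ B$ for a bounded functional $\psi_i$ (this is where Hahn--Banach enters, to extend the functional $Bx\mapsto g_i(x)$ to all of $X$), so $S=\sum_{i=1}^{k}u_i\otimes\psi_i$ satisfies $SB=C=(S+A)B$, and the same cancellation gives $\phi(A)\phi(B)=0$. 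Thus only the ``bad'' pairs with $\im C\not\subseteq\im A$ \emph{and} $\ker B\not\subseteq\ker C$ remain.

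For such a pair (we may assume $A,B\neq 0$, as $\phi(0)=0$) I would split off the obstruction additively, guided by $\im B\subseteq\ker A$. If $\overline{\im B}$ has large codimension, there is enough transverse room to write $A=A_1+A_2$ with $\im B\subseteq\ker A_j$ and $\im C\subseteq\im A_j$ for \emph{both} $j$: one lifts independent directions transverse to $\overline{\im B}$ (on the order of $\dim\im C$, doubled so that each summand inherits $\im C$ in its range via cancellation), using Hahn--Banach to produce bounded functionals vanishing on $\overline{\im B}$ that cut the summands out as finite-rank corrections of $A$. Then (I) applies to each $(A_j,B)$ and additivity finishes. If instead $\overline{\im B}$ has small codimension, then $\ker A\supseteq\overline{\im B}$ has finite codimension and is therefore infinite-dimensional, and I would dually split $B=B_1+B_2$ with $\im B_j\subseteq\ker A$ and $\ker B_j\subseteq\ker C$, applying (II). Since one of the two regimes always occurs, every zero product would be covered.

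The step I expect to be the real obstacle is analytic rather than algebraic: exhibiting all of the auxiliary maps as genuinely \emph{bounded} operators and functionals. In (II) the $\psi_i$ are a priori only linear on $\im B$, and their continuity---needed before invoking Hahn--Banach---must be extracted from $\ker B\subseteq\ker C$; and in the splitting one must verify that the codimension dichotomy always supplies the required independent vectors or functionals, \emph{including} the degenerate configurations such as $A=I-P$, $B=P$ for a rank-one projection $P$, where neither (I) nor (II) applies to the original pair and where $\ker A$ and $\overline{\im B}$ may coincide. This is exactly where the finite rank of $C$ is decisive: it keeps every constructed operator of finite rank, hence automatically bounded, so that, as the statement promises, only the Hahn--Banach theorem is needed to position the functionals that carve out the relevant subspaces.
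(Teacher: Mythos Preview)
Your condition (I) is correct, but (II) has a genuine gap that you have correctly located but not closed. The inclusion $\ker B\subseteq\ker C$ gives only a \emph{linear} factorization $g_i=\psi_i\circ B$ on $\im B$; it does \emph{not} force $\psi_i$ to be bounded, and without boundedness there is nothing for Hahn--Banach to extend. A clean counterexample: take $X=\ell^2$, $B=\mathrm{diag}(1/n)$, and $g=\sum_n n^{-1}e_n^{\,*}\in X'$. Then $\ker B=\{0\}\subseteq\ker C$ is automatic, yet the only $\psi$ with $\psi\circ B=g$ satisfies $\psi(e_n)=1$ for all $n$, which is unbounded. Your sub-case~(b) feeds exactly this situation back into (II) for the pieces $B_j$, so the splitting as described does not escape the problem; and you cannot fall back on (I) there either, since when $\overline{\im B}$ has codimension $m<k$ every operator annihilating $\overline{\im B}$ has rank at most $m<k$ and hence can never contain $\im C$ in its range.

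The paper sidesteps this entirely by a \emph{two-sided} construction rather than your one-sided (I)/(II). Given $QP=0$, it builds \emph{both} factors of $C$ from scratch as finite-rank operators: $B'=\sum w_i\otimes f_i$ with $w_i\in\ker Q$ (so $QB'=0$) and $A'=\sum v_i\otimes g_i$ with new functionals $g_i$ chosen, via Hahn--Banach, to vanish on $\ran P$ and satisfy $g_i(w_j)=\delta_{ij}$ (so $A'P=0$ and $A'B'=C$). Then the four equations $A'B'=C$, $(A'+Q)B'=C$, $A'(B'+P)=C$, $(A'+Q)(B'+P)=C$ force $\phi(Q)\phi(P)=0$. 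Because the functionals $g_i$ are manufactured directly by Hahn--Banach rather than factored through a prescribed operator, no boundedness obstruction arises. The only requirement is a $k$-dimensional subspace of $\ker Q$ transverse to $\ran P$; when that room is missing, the paper splits $P$ or $Q$ by a finite-rank projection to create it, and additivity finishes. If you want to salvage your outline, replace (II) by this two-sided trick: it is what makes ``only Hahn--Banach is needed'' literally true.
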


\begin{proof} Write $$C = v_1\otimes f_1 + \cdots + v_k \otimes f_k$$ for linearly independent vectors $\{v_1,\dotsc,v_k\}$ in $X$ and linearly independent bounded linear functionals $\{f_1,\dotsc,f_k\}$ in $X'$. 
Let $P$ and $Q$ be arbitrary nonzero operators such that $QP = 0$. 

First, suppose the $\ran(P)$ is a closed subspace of $\ker(Q)$ such that $\dim \ker(Q) / \ran P \geq k$ (in particular, $\ran(P)$ is not dense in $\ker(Q)$). 
Let $W$ be a $k$-dimensional subspace of $\ker(Q)$ disjoint from $\ran(P)$ and let $\{w_1,\dotsc,w_k\}$ be a basis of $W$. Define the operator $$A = v_1\otimes g_1 + \dotsc+ v_k\otimes g_k,$$ where the bounded linear functionals $g_i$ satisfy
$$g_i(w_j) = \delta_{ij}, \quad j = 1,\dotsc,k, \quad \text{ and } \quad g_i(\ran(P)) = 0$$
for all $i =1,\dotsc,k$. By construction, $AP = 0$. Define the operator $$B = w_1\otimes f_1 + \cdots + w_k\otimes f_k.$$ Since the $w_i$ are elements of $\ker(Q)$, by construction we have $QB = 0$ and it is easy to see that $AB = C$. Hence we have four equations:
$$AB = C, \qquad \phantom{\text{ and }} \qquad  (A+Q)B = C,$$
$$A(P+B) = C, \qquad \text{ and } \qquad (A+Q)(P+B) = C.$$

From additivity, the two equations $\phi(A)\phi(B) = D$ and $\phi(A+Q)\phi(B) = D$ imply that $\phi(Q)\phi(B) = 0$, and $\phi(A)\phi(P) = 0$ is done analogously. Finally, expanding the parentheses of $\phi(A+Q)\phi(P+B) = D$, along with the previous observations, implies that $\phi(Q)\phi(P) = 0$. 

If the dimension of $\ker(Q)/\ran(P)$ is less than $k$, it is not as simple to find such factorizations of $C$ including $Q$ and $P$. However, we can always reduce to the above case by decomposing $Q$ or $P$ with respect to a finite-rank projection to obtain quotients of dimension at least $k$ as follows.

Suppose that both $\ker(Q)$ and $\ran(P)$ are infinite-dimensional
with $\dim \ker(Q)/\ran(P) < k$. We do not assume that $\ran(P)$ is closed in $\ker(Q)$. Let $E$ be a finite rank projection onto an $n$-dimensional subspace of $\ran(P)$. 
Let $I_{\ker(Q)}$ denote the identity operator on $\ker(Q)$. Since $\ran(E)$ is finite-dimensional and $\ker(Q)$ is a normed space, decompose $\ker(Q) = \ran(E) \oplus \ran(I_{\ker(Q)} - E)$ as both an algebraic and topological direct sum; that is, both $\ran(E)$ and $\ran(I_{\ker(Q)}-E)$ are closed and complemented subspaces of $\ker(Q)$. It follows from the original argument that $EP$ and $(I_{\ker(Q)}-E)P$; hence $\phi(Q)\phi(EP) = 0$ and $\phi(Q)\phi((I_{\ker(Q)}-E)P) = 0$. By additivity,
$$\phi(Q)\phi(P) = \phi(Q)\phi(EP + (I_{\ker(Q)} - E)P) = 0.$$




On the other hand, if $\ker(Q)$ is finite-dimensional (therefore $P$ is finite-rank) and $\dim \ker(Q) / \ran(P) < k$, similarly decompose $Q$ as $EQ + (I-E)Q$ for finite-rank projections, $E$, where $I$ is the identity operator on $X$. Now, $\ker(EQ)$ has an infinite-dimensional kernel containing the finite-dimensional closed subspace $\ran(P)$, so $\phi(EQ)\phi(P) = 0$ by previous remarks. This does not depend on the rank of $E$. However, if one chooses $E$ to project onto $n$ linearly independent vectors in $\ran(Q)$, then $\ker ((I-E)Q)$ has dimension $n + \dim \ker(Q)$. But $n$ can be arbitrary (as $\ran(Q)$ is infinite-dimensional) and $\ran(P)$ is a finite-dimensional closed subspace of $\ker((I-E)Q)$. Select $n$ large enough so that $\dim \ker((I-E)Q) / \ran(P) \geq k$. Thus $\phi((I-E)Q)\phi(P) = 0$ and by additivity, we get 
$$\phi(Q)\phi(P) = \phi(EQ + (I-E)Q) \phi(P) = 0$$
for some appropriately chosen projection $E$. Therefore $\phi$ preserves the zero product.
\end{proof}

Let $H_1$ and $H_2$ be infinite-dimensional complex Hilbert spaces and $\phi:\mathcal B(H_1) \to \mathcal B(H_2)$ a bijective additive map preserving products at $C$, a finite-rank operator. It follows from Proposition \ref{frank} and [Corollary 2.8, \cite{chebotar}] that $\phi(T) = \alpha UTU^{-1}$, where $\alpha$ is a nonzero scalar and $U$ is an invertible bounded linear operator from $H_1$ onto $H_2$. This solves Problem P for finite-rank operators on Hilbert spaces. 

Along these lines, there is a similar result for arbitrary Banach spaces $X$, as long as the target space of $\phi$ is a \textit{prime} unital algebra. As mentioned in the introduction, a ring is prime if the product of two nonzero ideals is always nonzero (in particular, $\mathcal B(X)$ is prime). 

\begin{theorem}\label{frankt}
Let $\mathcal A$ be a prime unital algebra and $X$ be an infinite-dimensional Banach space. If $\phi: \mathcal B(X) \to \mathcal A$ is a bijective linear map such that $\phi(A)\phi(B) = D$ whenever $AB = C$, where $C$ is a finite-rank operator, then $\phi(I) = \alpha$ is an invertible element lying in the center of $\mathcal A$ and there exists an isomorphism $\Phi: \mathcal B(X) \to \mathcal A$ such that $\phi(T) = \alpha \Phi(T)$ for all $T \in \mathcal B(X)$.
\end{theorem}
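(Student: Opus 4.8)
The plan is to first collapse the problem to a zero‑product preserver via Proposition \ref{frank}, and then exploit the abundance of rank‑one idempotents in $\mathcal{B}(X)$ to upgrade zero‑product preservation into the exact multiplicative identity $\phi(S)\phi(T)=z\,\phi(ST)$, with $z=\phi(I)$; primeness of $\mathcal{A}$ will be invoked only at the final gluing step. Since $\phi$ is linear, hence additive, Proposition \ref{frank} applies and yields that $\phi$ preserves the zero product, i.e. $QP=0$ implies $\phi(Q)\phi(P)=0$.

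Writing $z=\phi(I)$, I would first extract local multiplicativity against idempotents. For an idempotent $e$ and arbitrary $a$, the four annihilations $(I-e)(ea)=0$, $e((I-e)a)=0$, $a(I-e)\cdot e=0$, $ae(I-e)=0$ feed into zero‑product preservation, and after substituting $\phi(I-e)=z-\phi(e)$ they give the two identities $\phi(e)\phi(a)=z\,\phi(ea)$ and $\phi(a)\phi(e)=\phi(ae)\,z$ (neither requires centrality of $z$). Because each rank‑one operator is either a scalar multiple of a rank‑one idempotent or a difference of two such idempotents — using Hahn–Banach to supply the needed functionals, exactly as in the proof of Proposition \ref{frank} — and every finite‑rank operator is a sum of rank‑one operators, linearity of $\phi$ propagates both identities to every $F$ in the ideal $\mathcal{F}(X)$ of finite‑rank operators:
\[
\phi(F)\phi(a)=z\,\phi(Fa), \qquad \phi(a)\phi(F)=\phi(aF)\,z \qquad (F\in\mathcal{F}(X),\ a\in\mathcal{B}(X)).
\]
Putting $a=I$ in the left identity shows that $z$ commutes with $\phi(F)$ for every $F\in\mathcal{F}(X)$.

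Next I would glue these finite‑rank relations into the global identity. Set $w=\phi(S)\phi(T)-z\,\phi(ST)$ for fixed $S,T\in\mathcal{B}(X)$ and test $w$ against $\phi(\mathcal{F}(X))$: two applications of the right‑hand finite‑rank identity give $\phi(S)\phi(T)\phi(F)=\phi(STF)\,z^{2}=z\,\phi(ST)\phi(F)$, so $w\,\phi(F)=0$ for all $F\in\mathcal{F}(X)$; one further application gives $w\,\phi(a)\phi(F)=w\,\phi(aF)\,z=0$, whence $w\,\mathcal{A}\,\phi(F)=0$ by surjectivity. Since $\phi$ is injective, $\phi(\mathcal{F}(X))\neq 0$, and primeness of $\mathcal{A}$ forces $w=0$, giving $\phi(S)\phi(T)=z\,\phi(ST)$ for all $S,T$. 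This is the step I expect to be the main obstacle, and it is exactly where primeness is indispensable: $\mathcal{F}(X)$ is an essential ideal of $\mathcal{B}(X)$, and the whole strategy hinges on $\phi(\mathcal{F}(X))$ remaining large enough to detect $w$ inside $\mathcal{A}$, which is precisely what primeness guarantees. (Note this route avoids assuming $\mathcal{B}(X)$ is spanned by idempotents, which can fail for indecomposable $X$.)

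Finally the conclusion becomes routine. Putting $T=I$ gives $\phi(S)\,z=z\,\phi(S)$, so surjectivity makes $z$ central. Choosing $U$ with $\phi(U)=1$ and putting $S=U$ gives $\phi(T)=z\,\phi(UT)$, so the image of $\phi$, which is all of $\mathcal{A}$, lies in $z\mathcal{A}$; hence $1=zw$ for some $w$, and centrality yields $wz=1$, so $z$ is invertible and central. Then $\Phi:=z^{-1}\phi$ is a linear bijection with $\Phi(I)=1$ and $\Phi(S)\Phi(T)=z^{-2}\phi(S)\phi(T)=z^{-1}\phi(ST)=\Phi(ST)$, so $\Phi$ is an algebra isomorphism and $\phi(T)=\alpha\,\Phi(T)$ with $\alpha=z=\phi(I)$ central and invertible, as claimed.
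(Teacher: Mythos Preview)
Your argument is correct. The initial reduction via Proposition \ref{frank} is exactly what the paper does; from there the paper simply invokes [Theorem 7.8, \cite{bresarzpdbook}], whereas you supply a direct, self-contained proof in its place. Your route is in fact the standard idempotent-based proof of that cited result specialized to $\mathcal{B}(X)$: you exploit the abundance of rank-one idempotents to obtain the one-sided identities $\phi(F)\phi(a)=z\,\phi(Fa)$ and $\phi(a)\phi(F)=\phi(aF)\,z$ on the finite-rank ideal, and then use primeness of $\mathcal{A}$ together with surjectivity to globalize. The paper's notes after the proof flag precisely this mechanism (noncentral idempotents replacing any zpd hypothesis, no continuity needed), so the two approaches are the same in spirit; yours trades brevity for self-containment and makes transparent exactly where primeness and bijectivity enter.
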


\begin{proof}
By Proposition \ref{frank}, $\phi$ preserves the zero product. The rest follows from [Theorem 7.8, \cite{bresarzpdbook}]. 
\end{proof}

A few notes are in order. $\mathcal B(X)$ is not always zpd but contains plenty of noncentral idempotents, a necessary hypothesis of [Theorem 7.8, \cite{bresarzpdbook}], which 
circumvents the zpd assumption and therefore does not require continuity. Taking $\mathcal A = \mathcal B(X)$, then we conclude that $\phi$ is a scalar multiple of an automorphism and thus of the form $\phi(T) = \alpha UTU^{-1}$ for some invertible operator $U$ (see Eidelheit \cite{eidelheit} for the proof that every automorphism of $\mathcal B(X)$ is inner). Furthermore, it is easily seen that $D = \alpha \phi(C)$. To emphasize, we did not assume anything about $D$ itself. Lastly, Theorem \ref{frankt} holds for finite-dimensional spaces $X$ provided that $C$ is noninvertible.
When $C$ is invertible, the map $\phi$ can behave like an antiautomorphism. See the remark following Theorem \ref{zaz} below. 

Let $\mathcal F(X)$ denote the space of finite-rank operators in $\mathcal B(X)$. Any unital subalgebra of $\mathcal B(X)$ containing $\mathcal F(X)$ also has the necessary operators to recreate the argument in Proposition \ref{frank} and Theorem \ref{frankt}.

\begin{corollary}\label{frankcor}
Let $\mathcal S$ be a unital subalgebra of $\mathcal B(X)$ containing $\mathcal F(X)$ and let $\mathcal A$ be a prime unital algebra. If $\phi: \mathcal S \to \mathcal A$ is a bijective linear map such that $\phi(A)\phi(B) = D$ whenever $AB = C$, where $C$ is a finite-rank operator, then $\phi$ is a scalar multiple of a homomorphism. 
\end{corollary}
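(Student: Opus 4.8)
The plan is to mirror the two-step structure used to prove Theorem \ref{frankt}: first establish that $\phi$ preserves the zero product on all of $\mathcal S$, and then feed this into the zero-product-preserver machinery of [Theorem 7.8, \cite{bresarzpdbook}]. The decisive observation, flagged in the paragraph preceding the statement, is that every auxiliary operator built in the proof of Proposition \ref{frank} is finite-rank, hence already lives inside $\mathcal S$.

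First I would revisit the proof of Proposition \ref{frank} and track exactly which operators it uses. Given $P, Q \in \mathcal S$ with $QP = 0$, the auxiliary operators $A = v_1 \otimes g_1 + \cdots + v_k \otimes g_k$ and $B = w_1 \otimes f_1 + \cdots + w_k \otimes f_k$ are rank-$k$ operators, hence lie in $\mathcal F(X) \subseteq \mathcal S$; likewise the finite-rank projections $E$ used to handle the low-codimension reductions lie in $\mathcal F(X)$, and products such as $EP$ and $(I - E)P$ remain in $\mathcal S$ because $\mathcal S$ is a unital algebra containing $I$ together with $P, Q$. Consequently each factorization $AB = C$, $(A+Q)B = C$, $A(P+B) = C$, and $(A+Q)(P+B) = C$ appearing in that proof involves only elements of $\mathcal S$, so the hypothesis that $\phi$ preserves products at $C$ applies verbatim. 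The same additivity cancellation then yields $\phi(Q)\phi(P) = 0$ for all $P, Q \in \mathcal S$ with $QP = 0$; that is, $\phi$ preserves the zero product on $\mathcal S$.

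Second, since $\mathcal F(X) \subseteq \mathcal S$, the algebra $\mathcal S$ contains an abundance of noncentral idempotents, for instance any rank-one projection $x \otimes f$ with $f(x) = 1$. This is precisely the hypothesis needed to invoke [Theorem 7.8, \cite{bresarzpdbook}] (the identical step that closes the proof of Theorem \ref{frankt}), which guarantees that a bijective linear zero-product preserver from such an algebra onto a prime unital algebra is a scalar multiple of a homomorphism. Applying it delivers the conclusion.

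I expect the only point demanding genuine care to be the bookkeeping in the first step, namely confirming that none of the reductions in Proposition \ref{frank} secretly require an operator lying outside $\mathcal S$. Because the sole non-finite-rank operators ever touched are $P$, $Q$, and the identity $I$, all of which belong to $\mathcal S$ by hypothesis, this verification is routine and the corollary follows with essentially no new work beyond this observation.
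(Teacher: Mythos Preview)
Your proposal is correct and follows exactly the route the paper indicates. The paper does not give a separate proof of the corollary at all; the sentence immediately preceding it (``Any unital subalgebra of $\mathcal B(X)$ containing $\mathcal F(X)$ also has the necessary operators to recreate the argument in Proposition \ref{frank} and Theorem \ref{frankt}'') is the entire justification, and your write-up is simply a careful unpacking of that remark, verifying that the auxiliary operators $A$, $B$, $E$, $EP$, $(I-E)Q$, etc.\ all lie in $\mathcal S$ and that the noncentral idempotents required for [Theorem 7.8, \cite{bresarzpdbook}] are available.
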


\section{Connections between the fixed elements of Problem P}\label{connect}
\newcommand{\ann}{\textup{ann}}
The most common type of fixed product preserver problems take $c= d$ or assume that $\phi(c) = d$. We do not wish to assume this for Problem P. In all known examples, the existence of a fixed product preserving map automatically implies some relationship between $c$ and $d$. Most notably, in the finite-dimensional case of $M_n(\C)$, Costara \cite{costaraproducts} proved that $\rank(C) = \rank(D)$ is automatic and need not be assumed. Theorem \ref{frankt} above also draws a similar conclusion that $C$ finite-rank implies that $D$ is finite-rank. For this section, the question of the forced relationship between $c$ and $d$ imposed by a product preserving mapping is explored in greater generality. 

Let $\textup{ann}_\ell(c) = \{x\in \mathcal A: xc = 0\}$ denote the subspace of left annihilators of $c$ and $\textup{ann}_r(c) = \{x \in \mathcal A : cx = 0\}$ denote the subspace of right annihilators of $c$. It is easy to see that any annihilator of $c$ must be taken to an annihilator of $d$ in the image of a fixed product preserving map. 

\begin{proposition}\label{ann}
Let $\mathcal A$ and $\mathcal B$ be unital algebras. Fix two elements $c \in \mathcal A$ and $d \in \mathcal B$ and let $\phi: \mathcal A \to \mathcal B$ be a linear map such that $\phi(a)\phi(b) = d$ whenever $ab = c$. Then
\begin{enumerate}\upshape
\item \textit{$\phi(\textup{ann}_\ell(c)) \subseteq \textup{ann}_\ell(d)$ and $\phi(\textup{ann}_r(c)) \subseteq \textup{ann}_r(d)$,}
\item \textit{if $\phi$ is injective and $d$ is left (resp. right) invertible, then $c$ is left (resp. right) invertible, and}
\item \textit{if $\phi$ is injective and $d$ is invertible, then $c$ is invertible.}
\end{enumerate}
\end{proposition}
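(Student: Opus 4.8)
The plan is to squeeze everything out of the two trivial factorizations $1\cdot c = c$ and $c\cdot 1 = c$, which the hypothesis immediately turns into
\[ \phi(1)\phi(c) = d = \phi(c)\phi(1). \]
For part (1) I would take $x \in \textup{ann}_\ell(c)$, so that $xc = 0$ and therefore $(1+x)c = c$; feeding this factorization into the hypothesis and subtracting $\phi(1)\phi(c) = d$ gives $\phi(x)\phi(c) = 0$. Right-multiplying by $\phi(1)$ and invoking the other identity $d = \phi(c)\phi(1)$ then yields $\phi(x)d = \phi(x)\phi(c)\phi(1) = 0$, i.e.\ $\phi(x) \in \textup{ann}_\ell(d)$. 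The inclusion for right annihilators is symmetric: from $cy = 0$ one uses $c(1+y) = c$ to reach $\phi(c)\phi(y) = 0$ and then left-multiplies by $\phi(1)$ using $d = \phi(1)\phi(c)$. This step needs only additivity of $\phi$, so no linearity or continuity is required.

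For parts (2) and (3) I would first record the elementary facts that a left (resp.\ right) invertible $d$ has $\textup{ann}_r(d) = \{0\}$ (resp.\ $\textup{ann}_\ell(d) = \{0\}$), and that an invertible $d$ has both annihilators trivial. Combining this with part (1) and the injectivity of $\phi$ transfers the vanishing back to $c$: if, say, $d$ is left invertible with $\ell d = 1$, then $\textup{ann}_r(d) = \{0\}$ forces $\phi(\textup{ann}_r(c)) = \{0\}$, hence $\textup{ann}_r(c) = \{0\}$. At the same time, applying $\ell$ on the left of the displayed identities shows that $\phi(1)$ and $\phi(c)$ are themselves left invertible in $\mathcal B$. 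The two-sided statement (3) I would obtain by running this one-sided argument on both sides at once, so that both annihilators of $c$ vanish and $\phi(1),\phi(c)$ become genuinely invertible in $\mathcal B$.

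The hard part will be the final implication, namely upgrading ``the relevant annihilator of $c$ is trivial'' to ``$c$ is one-sided (resp.\ two-sided) invertible.'' Because $\phi$ is only injective and linear, invertibility of $\phi(c)$ in $\mathcal B$ cannot simply be pulled back through $\phi$, and in a general unital algebra a trivial annihilator is strictly weaker than one-sided invertibility. To close this gap I would not rely on annihilators alone but exploit the full rigidity of the relation: every factorization $c = ab$ forces $\phi(a)\phi(b) = d$, and when $d$ is invertible this makes $\phi(a)$ right invertible and $\phi(b)$ left invertible, so I expect an inverse of $c$ to be manufactured from these constraints together with the injectivity of $\phi$. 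This is the step I would scrutinize most carefully, since it is exactly where the product-preserving hypothesis must do more than the annihilator bookkeeping of part (1).
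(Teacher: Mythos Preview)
Your argument for part (1) is identical to the paper's: use $(1+x)c=c$ to get $\phi(1+x)\phi(c)=d$, subtract $\phi(1)\phi(c)=d$ to obtain $\phi(x)\phi(c)=0$, then right-multiply by $\phi(1)$ and invoke $\phi(c)\phi(1)=d$ to conclude $\phi(x)d=0$; the right-annihilator case is symmetric.

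For parts (2) and (3), the paper does \emph{not} carry out any of the extra work you anticipate. Its entire proof is the annihilator bookkeeping you have already written: from part (1) and injectivity it deduces $\textup{ann}_r(c)=\{0\}$, and then simply writes ``and $c$ is left-invertible,'' passing immediately to (3). The gap you flagged---that in a general unital algebra a trivial one-sided annihilator is strictly weaker than one-sided invertibility (take $c=x$ in $\mathbb{C}[x]$, or a compact injective operator in $\mathcal B(H)$)---is genuine, and the paper does not address it. So you have reproduced the paper's argument in full and, on top of that, correctly located a lacuna in the paper's own proof at precisely the step you said you would scrutinize. Your side observation that $\phi(1)$ and $\phi(c)$ inherit one-sided (resp.\ two-sided) invertibility in $\mathcal B$ is correct and goes slightly beyond the paper, but as you note it does not by itself pull invertibility back to $c$ through a merely injective linear $\phi$; neither you nor the paper close that gap.
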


\begin{proof}
Suppose $x \in \textup{ann}_\ell(c)$. Then $(1+x)c = c$ 
implies that $\phi(1+x)\phi(c) = d$. By linearity, it follows that $\phi(x)\phi(c) = 0$, and hence 
\[0 = \phi(x)\phi(c)\phi(1) = \phi(x)d.\]
Therefore $\phi(\textup{ann}_\ell(c)) \subseteq \textup{ann}_\ell(d)$. The case for right annihilators is analogous. This establishes (1). 

For (2), if $d$ is left-invertible (that is, $\ann_r(d) = 0$), then $\phi(\ann_r(c)) = 0$. If $\phi$ is injective, it follows that $\ann_r(c) = 0$ and $c$ is left-invertible. The case for right-invertibility is analogous. Statement (3) follows from (2). 
\end{proof}

Next, we rely on invertible elements. Recall that the group of invertible elements of a Banach algebra is open in the norm topology. For the remainder of this section, we also assume that the invertible elements are dense as well. These Banach algebras are necessarily Dedekind-finite as rings (see, for example, [Proposition 3.1, \cite{rieffel}]).
While restrictive, some connections between $c$ and $d$ can be found. 

\begin{lemma}\label{balls}
Let $\mathcal A$ be a Banach algebra whose group of invertible elements is dense in the norm topology and let $\phi: \mathcal A \to \mathcal A$ be a surjective continuous linear map. For every open ball $B_\ve \subseteq \mathcal A$ of radius $\ve>0$, there exists an invertible operator $s \in B_\ve$ such that $\phi(s)$ is invertible.
\end{lemma}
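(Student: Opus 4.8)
The plan is to realize $s$ as a point lying in the intersection of three open sets: the given ball $B_\ve$, the group $G$ of invertible elements of $\mathcal A$, and the preimage $\phi^{-1}(G)$. Any point in all three is simultaneously invertible, lies in $B_\ve$, and has invertible image under $\phi$, which is exactly what the lemma demands. By hypothesis $G$ is open and dense in the norm topology, so the whole problem reduces to showing that $G \cap \phi^{-1}(G)$ is dense; once that is known, this dense set must meet the nonempty open set $B_\ve$, and we are done.

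The crucial step is to establish that $\phi^{-1}(G)$ is open and dense. Openness is immediate from continuity of $\phi$ together with openness of $G$. For density, I would invoke the open mapping theorem: since $\phi$ is a surjective bounded linear map between Banach spaces, it is an open map. Hence for any nonempty open set $U \subseteq \mathcal A$, the image $\phi(U)$ is nonempty and open, so it meets the dense set $G$; pulling back, $U \cap \phi^{-1}(G) \neq \emptyset$. As $U$ was an arbitrary nonempty open set, $\phi^{-1}(G)$ is dense.

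It then follows that $G$ and $\phi^{-1}(G)$ are both open and dense, so their intersection $G \cap \phi^{-1}(G)$ is again open and dense. This last fact is elementary for two sets: if $U$ is nonempty open, then $U \cap G$ is nonempty open because $G$ is dense, and $(U \cap G) \cap \phi^{-1}(G)$ is nonempty because $\phi^{-1}(G)$ is dense; alternatively it is immediate from the Baire category theorem, which applies since $\mathcal A$ is complete. Because $B_\ve$ is a nonempty open set, it meets $G \cap \phi^{-1}(G)$, and any $s$ in this intersection is an invertible element of $B_\ve$ with $\phi(s)$ invertible.

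I expect the main obstacle to be the density of $\phi^{-1}(G)$: surjectivity and continuity of $\phi$ alone do not force $\phi$ to send open sets to open sets, so the density of $G$ cannot be transported back to the preimage without the open mapping theorem. Completeness of $\mathcal A$ as a Banach algebra is precisely what makes that theorem available, and it is the one genuinely nontrivial input to the argument.
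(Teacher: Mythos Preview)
Your proof is correct and follows essentially the same approach as the paper, which gives a one-line proof citing the open mapping theorem and density of invertibles; you have simply spelled out the details. One minor remark: the openness of $G$ is not part of the hypothesis but a standard fact about unital Banach algebras (recalled in the paper just before the lemma).
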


\begin{proof}
This is an immediate consequence of the open mapping theorem and density of invertible elements in $\mathcal A$. 
\end{proof}

\begin{theorem}\label{prop}
Let $\mathcal A$ be a unital Banach algebra whose group of invertible elements is dense in the norm topology. Fix two elements $c,d \in \mathcal A$ such that $\phi: \mathcal A \to \mathcal A$ is a bijective continuous linear map satisfying $\phi(a)\phi(b) = d$ whenever $ab = c$. Then 
\begin{enumerate}\upshape
\item \textit{$c = 0$ if and only if $d = 0$, }
\item \textit{$c$ is invertible if and only if $d$ is invertible.}
\end{enumerate}
\end{theorem}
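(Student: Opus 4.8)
The plan is to dispatch the two easy implications directly from the hypotheses and Proposition~\ref{ann}, and to concentrate the real work on showing that invertibility of $c$ forces invertibility of $d$. For part (1), the forward direction is immediate: if $c = 0$ then $0\cdot 0 = c$, so the hypothesis gives $\phi(0)\phi(0) = d$, and since $\phi$ is linear, $d = 0$. For the converse, suppose $d = 0$; I would invoke Lemma~\ref{balls} to produce an invertible $s$ with $\phi(s)$ invertible, set $b = s^{-1}c$ so that $sb = c$, and read off $\phi(s)\phi(b) = d = 0$. Left-multiplying by $\phi(s)^{-1}$ gives $\phi(b) = 0$, whence $b = 0$ by injectivity and $c = sb = 0$. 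In part (2), the implication ``$d$ invertible $\Rightarrow c$ invertible'' is exactly Proposition~\ref{ann}(3), since $\phi$ is bijective and hence injective.

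The substance is therefore the implication ``$c$ invertible $\Rightarrow d$ invertible.'' The key observation is a reparametrization of the defining relation: if $c$ is invertible, then for every invertible $b$ the factorization $(cb^{-1})b = c$ is valid, so the hypothesis yields
\[
\phi(cb^{-1})\,\phi(b) = d .
\]
Consequently, it suffices to exhibit a single invertible $b$ for which both $\phi(b)$ and $\phi(cb^{-1})$ are invertible; then $d$ is a product of two invertible elements and hence invertible.

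To find such a $b$, I would work with the set $V = \{s \in \mathcal A : s \text{ and } \phi(s) \text{ are both invertible}\}$, which is the intersection of the open group of invertibles with its preimage under the continuous map $\phi$, and is therefore open; Lemma~\ref{balls} shows that $V$ meets every ball, so $V$ is also dense in $\mathcal A$. Tracking the requirement on $b$, the condition $\phi(b)$ invertible means $b \in V$, while $\phi(cb^{-1})$ invertible amounts to $cb^{-1} \in V$, i.e. $b \in V^{-1}c$. Since inversion is a homeomorphism of the group of invertibles and right translation by the invertible element $c$ is a homeomorphism of $\mathcal A$, the set $V^{-1}c$ is again open and dense. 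Two open dense subsets of $\mathcal A$ have dense (in particular nonempty) intersection, so I can pick $b \in V \cap V^{-1}c$; writing $b = v^{-1}c$ with $v \in V$ gives $cb^{-1} = v \in V$, and the displayed relation then expresses $d = \phi(v)\phi(b)$ as a product of invertibles.

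The main obstacle is the topological bookkeeping in the last step: one must verify that $V^{-1}$, and then $V^{-1}c$, really are open and dense in all of $\mathcal A$ and not merely within the group of invertibles. This uses openness of the group of invertibles together with the fact that inversion and right translation by $c$ are homeomorphisms. Once both sets are known to be open and dense, their intersection is nonempty by an elementary density argument (or by Baire), and the remainder is the routine algebra recorded above.
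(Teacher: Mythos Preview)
Your proposal is correct, and its overall architecture matches the paper's: the easy directions are handled as you indicate, and the substantive implication ``$c$ invertible $\Rightarrow$ $d$ invertible'' is reduced to producing a single invertible $t$ with both $\phi(t)$ and $\phi(t^{-1}c)$ invertible. The paper carries this out concretely by first applying Lemma~\ref{balls} to obtain an invertible $s$ with $\phi(s)$ invertible, shrinking to a ball $B_\varepsilon(s)$ on which $\phi$ takes invertible values, pushing this ball through the homeomorphism $a \mapsto a^{-1}c$, and applying Lemma~\ref{balls} a second time inside the resulting open set. Your formulation packages the same two applications into the statement that $V=\{s: s,\ \phi(s)\ \text{invertible}\}$ and $V^{-1}c$ are both open and dense, hence meet; this is a clean repackaging rather than a different idea, and your closing paragraph correctly isolates the one point that needs care (density of $V^{-1}$, hence of $V^{-1}c$, in all of $\mathcal A$). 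Where you genuinely streamline the paper is the converse in part~(1): the paper argues by contradiction that if $d=0$ and $c\neq 0$ then every invertible $t$ has $\phi(t)$ noninvertible, so $\phi$ sends an open ball to an open set of noninvertibles, contradicting density; your direct argument via a single application of Lemma~\ref{balls} is shorter and avoids the contradiction entirely.
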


\begin{proof}
If $c = 0$, then $\phi(0)\phi(0) = 0=d$. On the other hand, suppose that $d = 0$ and $c \neq 0$. Let $t_0$ be invertible. There exists an $\varepsilon > 0$ such that $\norm{t - t_0} < \ve$ implies that $t$ is invertible. Hence $\phi(t)\phi(t^{-1}c) = 0$ for all $t \in B_\ve(t_0)$ (here, $B_\ve(t_0)$ denotes the ball of radius $\ve$ centered at $t_0$). If $\phi$ is injective, we have that $\phi(t^{-1}c) \neq 0$, hence $\phi(t)$ is not invertible for all $t \in B_\ve(t_0)$. But $\phi$ is a homeomorphism, so $\phi(B_\ve(t_0))$ is an open set and apparently consists only of noninvertible elements. This contradicts density and so $c=0$.


To prove statement (2), note that the backward direction is precisely Proposition \ref{ann}(3). For the forward direction, suppose $c$ is invertible. By Lemma \ref{balls}, let $s$ be any invertible element such that $\phi(s)$ is also invertible. Let $\ve > 0$ be chosen suitably small so that both the ball $B_\ve(s)$ and its image $\phi(B_\ve(s))$ consists only of invertible elements. Then $\phi(t)$ is invertible for all $t \in B_\ve(s)$. 

Consider the bijective map $f(a) = a^{-1}c$ on the group of invertible elements. Hence $f(B_\ve(s))$ is another open set consisting entirely of invertible elements of the form $a^{-1}c$, where $\norm{a - s} < \ve$. Another application of Lemma \ref{balls} gives the existence of an element $t \in B_\ve(s)$ such that $\phi(t^{-1}c)$ is also invertible. Now $\phi(t)$ and $\phi(t^{-1}c)$ are both invertible and their product is $d$, thus $d$ is invertible. Injectivity is not required.\end{proof}

Unless $X$ is finite-dimensional, $\mathcal B(X)$ does not generally have a dense invertible group. Regrettably Theorem \ref{prop} therefore does not apply to perhaps the most interesting cases. Nevertheless, it seems unlikely that the conclusions of the theorem should fail.

\begin{problem}
Does Theorem \ref{prop} hold if $\mathcal A = \mathcal B(X)$, where $X$ is an infinite-dimensional Banach space?
\end{problem}

In addition, Costara shows something stronger with [Lemma 1, \cite{costaraproducts}] for the Banach algebra of $n \times n$ matrices $M_n(\C)$; in particular, that $d \neq 0$ implies that $\phi$ preserves invertibility. The proof assumes that there is an invertible element $t_0$ such that $\phi(t_0)$ is not invertible. The argument works just as well for arbitrary Banach algebras with dense invertible group until the ultimate step when Costara uses convenient matrix representations of noninvertible elements to contradict the assumption on $\phi(t_0)$. We suspect that a similar result will hold for Theorem 3.3. 

\begin{problem}
If $\phi$ satisfies the hypotheses of Theorem \ref{prop}, does $d \neq 0$ imply that $\phi$ preserves invertibility?
\end{problem}

Lastly we explore the bijectivity assumption. When $d \neq 0$, clearly $\phi(t) \neq 0$ for every invertible element $t \in \mathcal A$. 
Suppose only that $\phi$ is surjective. Must $\phi$ be injective? For simplicity we will present an example for real Banach algebras but the idea remains the same for the complex case. Let $\mathcal A = C[0,1]$ be the Banach algebra of continuous functions on the interval $[0,1]$ endowed with the sup norm. The invertible elements are the nonvanishing continuous functions and are dense. Consider the linear map $\phi: C[0,1] \to C[0,1]$ defined by $\phi(f)(x) = f(\tfrac x2)$. The map is a surjective and continuous algebra homomorphism and therefore preserves products at a fixed function $c \in C[0,1]$ (and preserves invertibility). However the kernel of $\phi$ contains \textit{any} continuous function vanishing on $[0,\tfrac12]$. This also shows that the injectivity assumption of Theorem \ref{prop}(1) is indispensable.

\section{Fixed product preservers at invertible elements}\label{inv}

The final theorem provides a complete description of maps that preserve products at $c$ where the second fixed element, $d$, is invertible. It turns out that preserving products at $c$ also strongly preserves invertibility, and continuity need not be assumed. Recall that a map $\psi: \mathcal A \to \mathcal B$ is said to \textit{strongly preserve invertibility} if $\psi(x^{-1}) = \psi(x)^{-1}$ for all invertible $x \in \mathcal A$. 

\begin{theorem}\label{zaz}
Let $\mathcal A$ and $\mathcal B$ be unital Banach algebras and let $\phi: \mathcal A \to \mathcal B$ be a bijective linear map such that $\phi(a)\phi(b) = d$ whenever $ab = c$, where $d$ is invertible. Then
\begin{enumerate}\normalfont
\item \textit{$\phi(x^{-1}) = z\phi(x)^{-1}z$ for all invertible $x \in \mathcal A$, where $z = \phi(1)$,} 
\item \textit{$z^{-1}\phi$ strongly preserves invertibility, therefore}
\item \textit{$z^{-1}\phi$ a Jordan homomorphism, and}
\item \textit{if $\mathcal B$ is prime, then $z^{-1}\phi$ is either a homomorphism or antihomomorphism.}
\end{enumerate}
\end{theorem}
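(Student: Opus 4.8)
The plan is to peel off the four conclusions in order, treating (1)/(2) as the analytic heart of the matter and (3)/(4) as citations of classical structure theory. First I would record the elementary consequences of the two trivial factorizations $1\cdot c = c = c\cdot 1$. These give $z\phi(c) = \phi(c)z = d$ with $z = \phi(1)$; since $d$ is invertible, both $z$ and $\phi(c)$ are invertible, $z$ commutes with $\phi(c)$ and with $d$, and $d^{-1}z = zd^{-1} = \phi(c)^{-1}$. Injectivity of $\phi$ together with Proposition \ref{ann}(3) forces $c$ to be invertible as well, so for every invertible $x$ the factorizations $x\cdot(x^{-1}c) = c$ and $(cx^{-1})\cdot x = c$ are available. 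Feeding these into the hypothesis yields $\phi(x)\phi(x^{-1}c) = d = \phi(cx^{-1})\phi(x)$, whence $\phi(x)$ is both left- and right-invertible; that is, $\phi(x)$ is invertible with the explicit formula $\phi(x)^{-1} = \phi(x^{-1}c)d^{-1} = d^{-1}\phi(cx^{-1})$. In particular $\psi := z^{-1}\phi$ is unital and maps invertibles to invertibles. A short computation using $d^{-1}z = \phi(c)^{-1}$ then shows that statement (1), $\phi(x^{-1}) = z\phi(x)^{-1}z$, is equivalent to statement (2) and, after cancelling $\phi(c)$, to the single bilinear identity
\[ z\,\phi(yc) = \phi(y)\phi(c) \qquad (y \text{ invertible}), \]
which I will call $(\ast)$. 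Everything therefore reduces to $(\ast)$.

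To prove $(\ast)$ I would exploit the resolvent. For $\lambda$ outside the (nonempty, compact) spectrum of $y$ the element $y-\lambda$ is invertible, so the inverse formula gives the exact identity
\[ \phi\bigl((y-\lambda)^{-1}c\bigr) = \bigl(\phi(y)-\lambda z\bigr)^{-1}d, \]
valid on a cofinite set of scalars and requiring no continuity of $\phi$. The right-hand side is a genuine $\mathcal B$-valued rational function of $\lambda$ whose Laurent expansion at infinity has $\lambda^{-1}$-coefficient $-\phi(c)$ and $\lambda^{-2}$-coefficient $-z^{-1}\phi(y)\phi(c)$, computed purely on the $\mathcal B$-side. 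On the $\mathcal A$-side the argument $(y-\lambda)^{-1}c$ has $\lambda^{-2}$-coefficient $-yc$, so extracting and matching the $\lambda^{-2}$ coefficient yields exactly $\phi(yc) = z^{-1}\phi(y)\phi(c)$, i.e. $(\ast)$. In the finite-dimensional case this matching is purely algebraic, since multiplying through by the characteristic-polynomial denominator turns both sides into $\mathcal B$-valued polynomials determined by finitely many evaluations; this is essentially the mechanism behind Costara's matrix argument \cite{costaraproducts}. I expect the coefficient extraction in a general Banach algebra, where the resolvent is not rational of bounded degree, to be the main obstacle: one must justify passing the limit $\lambda\to\infty$ through $\phi$ without assuming continuity, which is the only place where the assertion that ``continuity need not be assumed'' is genuinely at stake. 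I would try to circumvent it either by an automatic-continuity argument for $\phi$ on the commutative subalgebra generated by $y$, or by a finite-difference/interpolation identity isolating the coefficient from finitely many invertible translates $y-\lambda$.

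Once $(\ast)$, hence (1) and (2), is in hand, statements (3) and (4) are structural. For (3) I would invoke the classical fact that a unital linear map strongly preserving invertibility is a Jordan homomorphism: Hua's identity together with $\psi(x^{-1}) = \psi(x)^{-1}$ produces the Jordan triple relation $\psi(xyx) = \psi(x)\psi(y)\psi(x)$, and specializing $y = 1$ (using $\psi(1) = 1$) yields $\psi(x^2) = \psi(x)^2$; this step is purely algebraic and needs no continuity. For (4), note that $\psi = z^{-1}\phi$ is a bijection onto $\mathcal B$ because $\phi$ is bijective and $z$ is invertible, so when $\mathcal B$ is prime Herstein's theorem on Jordan homomorphisms onto prime rings applies and forces $\psi$ to be either a homomorphism or an antihomomorphism, completing the proof.

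The appearance of the antihomomorphism alternative is consistent with the remark that for invertible $C$ the map may behave like an antiautomorphism, and it confirms that this case cannot be folded into the zero-product preserver framework of Section \ref{finite}, since an antihomomorphism sends $ab=0$ to $\psi(b)\psi(a)=0$ rather than $\psi(a)\psi(b)=0$.
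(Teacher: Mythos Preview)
Your preliminary reductions (invertibility of $z$, $\phi(c)$, and $c$; the formula $\phi(x)^{-1}=\phi(x^{-1}c)d^{-1}$; the reformulation of (1) as the identity $z\phi(yc)=\phi(y)\phi(c)$) are correct and agree with the paper, as do your treatments of (3) and (4) via the standard Hua/Jordan argument and Herstein's theorem.

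The divergence is in the proof of (1), and there your resolvent approach has exactly the gap you flag. The identity $\phi((y-\lambda)^{-1}c)=(\phi(y)-\lambda z)^{-1}d$ is indeed valid for each individual $\lambda\notin\sigma(y)$, but matching the $\lambda^{-2}$ coefficients of the two sides requires either passing a limit through $\phi$ or summing an infinite series under $\phi$. Without continuity this is not justified, and neither proposed workaround is known to succeed: there is no automatic-continuity theorem for a bare bijective linear map on a commutative Banach subalgebra, and a finite-difference extraction of the $\lambda^{-2}$ term would amount to writing $yc$ as a \emph{finite} $\mathbb{C}$-linear combination of elements $(y-\lambda_i)^{-1}c$, which fails whenever $y$ is not algebraic. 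So as written the argument proves the theorem only under an added continuity hypothesis on $\phi$.

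The paper avoids this entirely by a purely algebraic device: the special case of Hua's identity $(1-x)^{-1}=1+(x^{-1}-1)^{-1}$. One expands $d=\phi(1-x)\phi((1-x)^{-1}c)$ using this identity and linearity, substitutes $\phi((x^{-1}-1)^{-1}c)=\phi(x^{-1}-1)^{-1}d$, and after cancelling $d$ and right-multiplying by $\phi(x^{-1}-1)$ obtains $\phi(x)z^{-1}\phi(x^{-1})=z$ directly, with no limits or series. Compactness of the spectrum is used only once, to choose a nonzero scalar $\lambda$ with $1-\lambda x$ invertible and then cancel $\lambda$ by linearity. This is exactly the ``finite'' substitute you were looking for: Hua's identity already packages the relevant resolvent information into a single algebraic relation involving just $x$, $1-x$, and $x^{-1}-1$.
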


\begin{proof}
By Proposition \ref{ann}(3), $c$ is invertible. Moreover, $\phi$ preserves invertibility. Indeed, for every invertible element $t \in \mathcal A$, then $\phi(t)\phi(t^{-1}c)d^{-1} = 1$ and $d^{-1}\phi(ct^{-1})\phi(t)=1$, so $\phi(t)$ is invertible. 
The remainder of the proof largely follows the proof that was carried out in finite dimensions (see \cite{catalanohsukapalko,catalanojulius}), since only two very general facts are needed:
\begin{enumerate}[label = $\bullet$]
\item the spectrum of $a$, denoted $\sigma(a)$, is compact for all $a \in \mathcal A$, and 
\item a special case of Hua's identity; namely, $(1-a)^{-1} = 1+(a^{-1}-1)^{-1}$, which holds in any ring whenever $a$ and $1-a$ are both invertible.
\end{enumerate}

Let $x$ be an element such that $1-x$ is invertible. 
Using the special case of Hua's identity, 
\begin{equation*}\begin{aligned}
d &= \phi(1-x)\phi((1-x)^{-1}c)\\
&= \phi(1-x)\phi(c+(x^{-1}-1)^{-1}c)\\
&= d+z\phi((x^{-1}-1)^{-1}c)- \phi(x)\phi(c) - \phi(x)\phi((x^{-1}-1)^{-1}c)
\end{aligned}\end{equation*}
simplifies to 
\begin{equation*}
0 = z\phi((x^{-1}-1)^{-1}c)- \phi(x)\phi(c) - \phi(x)\phi((x^{-1}-1)^{-1}c).
\end{equation*}
For any invertible $t$, we have $\phi(t^{-1}c) = \phi(t)^{-1}d$. Revisiting the previous equation, 
\begin{equation*}
0 = z\phi(x^{-1}-1)^{-1}d - \phi(x)\phi(c) - \phi(x)\phi(x^{-1}-1)^{-1}d
\end{equation*}
and since $d$ is invertible (along with $z^{-1} = \phi(c)d^{-1}$),
$$0 = z\phi(x^{-1}-1)^{-1} - \phi(x)z^{-1} - \phi(x)\phi(x^{-1}-1)^{-1}.$$
Multiply through by $\phi(x^{-1}-1)$ on the right to obtain 
$$0 = z - \phi(x)z^{-1}\phi(x^{-1}-1) - \phi(x).$$
Writing $\phi(x^{-1}-1) = \phi(x^{-1}) - z$ and some rearranging reveals that
$$\phi(x)z^{-1}\phi(x^{-1})=z,$$
or, equivalently,
\begin{equation}\label{ZZ}\phi(x^{-1}) = z\phi(x)^{-1}z.\end{equation}
This establishes the desired conclusion for those invertible elements $x \in \mathcal A$ such that $1 - x$ is invertible.

In the event that $1-x$ is not invertible (in other words, $1 \in \sigma(x)$), here we use compactness of the spectrum to select a nonzero 
$\lambda \in \C$ such that $1-\lambda x$ is invertible. Returning to equation (\ref{ZZ}) we have
$$\phi((\lambda x)^{-1}) = z\phi(\lambda x)^{-1}z$$
and using the linearity of $\phi$ to cancel the common scalar $\lambda^{-1}$ on both sides of the equation, we conclude that
$$\phi(x^{-1}) = z\phi(x)^{-1}z$$
as well. Hence $\phi(x^{-1}) = z\phi(x)^{-1}z$ for all invertible 
$x \in \mathcal A$. 

For (2), define $\psi(x) :=z^{-1}\phi(x)$ for all $x \in \mathcal A$. Then $\psi:\mathcal A \to \mathcal B$ is a unital surjective continuous linear map that strongly preserves invertibility since, for all invertible $x \in \mathcal A$, 
\begin{equation*}\begin{aligned}
\psi(x^{-1}) &= z^{-1}\phi(x^{-1})\\
&= z^{-1}\phi(x^{-1})z^{-1}z\\
&= \phi(x)^{-1}z\\
&= (z^{-1}\phi(x))^{-1}\\
&= \psi(x)^{-1}
\end{aligned}\end{equation*}
by Equation (\ref{ZZ}). By [Theorem 2.2, \cite{boudimbekhta}] or [Theorem 2.2, \cite{burgosmarquezgarciamoralescampoy}], we then have that $\psi$ is a unital Jordan homomorphism. This proves (3).

For (4), we recall [Theorem H, \cite{herstein2}]: any Jordan homomorphism onto a prime ring $\mathcal B$ (of characteristic different from 2 or 3, which is certainly the case) is either a homomorphism or antihomomorphism.  \end{proof}

\begin{rem}\upshape
In general, $z^{-1}\phi$ as in Theorem \ref{zaz}(4) can be an antihomomorphism and $z$ need not be central. Let $\phi: M_n(\C) \to M_n(\C)$ be defined by $\phi(T) = \alpha DUT^tU^{-1}$, where $\alpha \in \C$, $T^t$ is the transpose of $T$, and both $D$ and $U$ are invertible and satisfy the equation $\alpha^2UC^tU^{-1} = D^{-1}$ for an invertible matrix $C$. Then $\phi(A)\phi(B) = D$ whenever $AB = C$. Notice that $z = \phi(I) = \alpha D$. This example is from \cite{catalanojulius2,catalanojulius}. 
\end{rem}

\begin{rem}\upshape Strong invertibility preservers have been frequently studied (see \cite{chebotarkeleeshiao,EK,linwong,mbekhta}) with some authors relaxing the condition to $\psi(a)\psi(a^{-1}) = \psi(b)\psi(b^{-1})$ for all invertible $a,b$ in the domain of $\psi$. Indeed, if $z$ is central in the above, then it is easy to see that $\phi(x)\phi(x^{-1}) = \phi(y)\phi(y^{-1})$ for all invertible $x,y \in \mathcal A$. Descriptions for maps satisfying this hypotheses on unital Banach algebras can be found in \cite{burgosmarquezgarciamoralescampoy}, which resolves Problem P in the case $c = d = 1$. Maps satisfying this condition are important for operator-theoretic generalizations of Hua's theorem for division rings [Theorem 9.1.3, \cite{cohn}].
\end{rem}

\section*{Acknowledgments}

The author would like to thank the referees for providing valuable feedback to improve the presentation of the paper.

\bibliographystyle{plain}
\bibliography{bib.tex}

\begin{thebibliography}{10}

\bibitem{alaminos}
J.~Alaminos, M.~Bre{\v{s}}ar, J.~Extremera, and A.~R. Villena.
\newblock Maps preserving zero products.
\newblock {\em Studia Math}, 193(2):131--159, 2009.

\bibitem{boudimbekhta}
N.~Boudi and M.~Mbekhta.
\newblock Additive maps preserving strongly generalized inverses.
\newblock {\em J. Operator Theory}, 64(1):117--130, 2010.

\bibitem{bresarzpdbook}
M.~Bre\v{s}ar.
\newblock {\em Zero product determined algebras}.
\newblock Frontiers in Mathematics. Birkh\"{a}user/Springer, Cham, 2021.

\bibitem{bresarzpd}
M.~Bre\v{s}ar, M.~Gra\v{s}i\v{c}, and J.~S\'{a}nchez Ortega.
\newblock Zero product determined matrix algebras.
\newblock {\em Linear Algebra Appl.}, 430(5-6):1486--1498, 2009.

\bibitem{burgosmarquezgarciamoralescampoy}
M.~Burgos, A.C. M{\'a}rquez-Garc{\'\i}a, and A.~Morales-Campoy.
\newblock Determining {J}ordan (triple) homomorphisms by invertibility
  preserving conditions.
\newblock {\em Linear Multilinear Algebra}, 63(7):1294--1308, 2015.

\bibitem{catalano}
L.~Catalano.
\newblock On maps preserving products equal to a rank-one idempotent.
\newblock {\em Linear Multilinear Algebra}, 0(0):1--8, 2019.

\bibitem{catalanochanglee}
L.~Catalano and M.~Chang-Lee.
\newblock On maps preserving rank-one nilpotents.
\newblock {\em Linear Multilinear Algebra}, 69(16):3092--3098, 2021.

\bibitem{catalanohsukapalko}
L.~Catalano, S.~Hsu, and R.~Kapalko.
\newblock On maps preserving products of matrices.
\newblock {\em Linear Algebra Appl.}, 563:193--206, 2019.

\bibitem{catalanojulius2}
L.~Catalano and H.~Julius.
\newblock On maps preserving products equal to a diagonalizable matrix.
\newblock {\em Comm. Algebra}, 49(10):4334--4344, 2021.

\bibitem{catalanojulius}
L.~Catalano and H.~Julius.
\newblock On maps preserving products equal to fixed elements.
\newblock {\em J. Algebra}, 575:220--232, 2021.

\bibitem{chebotarkeleeshiao}
M.~A. Chebotar, W.-F. Ke, P.-H. Lee, and L.-S. Shiao.
\newblock On maps preserving products.
\newblock {\em Canad. Math. Bull.}, 48(3):355–369, 2005.

\bibitem{chebotar}
M.~A. Chebotar, W.-F. Ke, P.-H. Lee, and N.-C. Wong.
\newblock Mappings preserving zero products.
\newblock {\em Studia Math.}, 155(1):77--94, 2003.

\bibitem{cklz}
M.~A. Chebotar, W.-F. Ke, P.-H. Lee, and R.~Zhang.
\newblock On maps preserving zero {J}ordan products.
\newblock {\em Monatsh. Math.}, 149(2):91--101, 2006.

\bibitem{cohn}
P.~M. Cohn.
\newblock {\em Further algebra and applications}.
\newblock Springer Science \& Business Media, 2002.

\bibitem{costaraproducts}
C.~Costara.
\newblock Linear bijective maps preserving fixed products of matrices.
\newblock {\em J. Algebra}, 587:336--343, 2021.

\bibitem{eidelheit}
M.~Eidelheit.
\newblock On isomorphisms of rings of linear operators.
\newblock {\em Studia Math.}, 9:97--105, 1940.

\bibitem{EK}
H.~Essannouni and A.~Kaidi.
\newblock Le th\'{e}or\`eme de {H}ua pour les alg\`ebres artiniennes simples.
\newblock {\em Linear Algebra Appl.}, 297(1-3):9--22, 1999.

\bibitem{GJV}
V.~Ginsburg, H.~Julius, and R.~Velasquez.
\newblock On maps preserving {L}ie products equal to a rank-one nilpotent.
\newblock {\em Linear Algebra Appl.}, 593:212 -- 227, 2020.

\bibitem{herstein2}
I.~N. Herstein.
\newblock Jordan homomorphisms.
\newblock {\em Trans. Amer. Math. Soc.}, 81:331--341, 1956.

\bibitem{houcui}
J.~Hou and J.~Cui.
\newblock Additive maps on standard operator algebras preserving
  invertibilities or zero divisors.
\newblock {\em Linear Algebra Appl.}, 359(1-3):219--233, 2003.

\bibitem{julius2}
H.~Julius.
\newblock On maps preserving {L}ie products equal to e11-e22.
\newblock {\em Linear Multilinear Algebra}, 69(14):2620--2628, 2021.

\bibitem{kaplanskyinv}
I.~Kaplansky.
\newblock {\em Algebraic and analytic aspects of operator algebras}, volume~1.
\newblock American Mathematical Soc., 1970.

\bibitem{linwong}
Y.-F. Lin and T.-L. Wong.
\newblock A note on 2-local maps.
\newblock {\em Proc. Edinb. Math. Soc.}, 49(3):701--708, 2006.

\bibitem{mbekhta}
M.~Mbekhta.
\newblock A {H}ua type theorem and linear preserver problems.
\newblock In {\em Math. Proc. R. Ir. Acad.}, pages 109--121. JSTOR, 2009.

\bibitem{molnar}
L.~Moln\'{a}r.
\newblock {\em Selected preserver problems on algebraic structures of linear
  operators and on function spaces}, volume 1895 of {\em Lecture Notes in
  Mathematics}.
\newblock Springer-Verlag, Berlin, 2007.

\bibitem{omladicsemrl}
M.~Omladi\v{c} and P.~\v{S}emrl.
\newblock Additive mappings preserving operators of rank one.
\newblock {\em Linear Algebra Appl.}, 182:239--256, 1993.

\bibitem{rieffel}
M.~A. Rieffel.
\newblock Dimension and stable rank in the {K}-theory of {C}*-algebras.
\newblock {\em Proc. London Math. Soc.}, 3(2):301--333, 1983.

\bibitem{semrl2}
P.~\v{S}emrl.
\newblock Commutativity preserving maps.
\newblock {\em Linear Algebra Appl.}, 429(5-6):1051--1070, 2008.

\bibitem{wong}
W.~J. Wong.
\newblock Maps on simple algebras preserving zero products. {I}. {T}he
  associative case.
\newblock {\em Pacific J. Math.}, 89(1):229--247, 1980.

\bibitem{zhu}
J.~Zhu, C.~Xiong, and H.~Zhu.
\newblock Multiplicative mappings at some points on matrix algebras.
\newblock {\em Linear Algebra Appl.}, 433(5):914--927, 2010.

\end{thebibliography}
\end{document}